\newtheorem{thm}{Theorem}[section]
\newtheorem{lem}[thm]{Lemma}
\newtheorem*{lem*}{Lemma}
\theoremstyle{definition}
\newtheorem{defn}[thm]{Definition}
\newtheorem{eg}[thm]{Example} 
\newcommand{\sline}{{\Xi}}
\theoremstyle{plain}
\newcommand{\THMMonoidalDirealization}{Theorem \cite[5.23]{krishnan2015cubical}}
\newtheorem*{thm:monoidal.direalization}{\THMMonoidalDirealization}
\newcommand{\THMDiEmbed}{Theorem \cite[2.5]{fernandes2007classification}}
\newtheorem*{thm:diembed}{\THMDiEmbed}
\title{Triangulations of conal manifolds}
\author{Sanjeevi Krishnan}
\begin{document}
\begin{abstract}
  Casual structure can take the form of cone bundles on a manifold, more general local preorders on a topological space, or simplicial orientations implicit in a simplicial set.
	This note takes a triangulation of a conal manifold $M$ to mean an isomorphism between $M$ and the locally preordered geometric realization of a simplicial set.
	Such triangulations completely encode causal and topological structure combinatorially.  
	This note characterizes the triangulability of closed conal manifolds as the fibrewise freeness and generativity of the cone bundle.
\end{abstract}
\maketitle
\tableofcontents
\addtocontents{toc}{\protect\setcounter{tocdepth}{1}}

\section{Introduction}
Discrete approximations of phase spaces can give tractable methods for analyzing physical systems.  
Those discrete approximations often capture both the topology of the phase space and some of the causal structure relating different states.  
An example is a triangulation of spacetime into a partially oriented simplicial complex \cite{ambjorn2001dynamically,regge2000discrete}; incidence relations between simplices capture the topology, while the partial orientations capture some causal structure.
This note considers an alternative type of triangulation [Definition \ref{defn:triangulable}] for more general \textit{conal manifolds} and even more general locally preordered spaces, where partially oriented simplicial complexes are replaced with \textit{simplicial sets} and all topological $n$-simplices have exactly the same causal structure.
On one hand, the only $n$-spacetimes that admit such triangulations occur in dimensions $n=0,1,2$. 
On the other hand, the abstract simplicial sets in these triangulations completely capture both topological and casual structure.
This note characterizes triangulability for closed conal manifolds [Theorem \ref{thm:triangulable}].  
In particular, all closed timelike surfaces are triangulable [Figure \ref{fig:compact.timelike.surfaces}].
Section \S\ref{sec:ditopology} recalls definitions of \textit{conal manifolds} and more general \textit{streams}.  
Section \S\ref{sec:simplicial} recalls the definition of \textit{simplicial sets}.
Section \S\ref{sec:triangulations} recalls the definition of \textit{stream realizations $|S_*|$} of simplicial sets $S_*$ and defines a \textit{triangulation} of a conal manifold to be a stream isomorphism of the form $|S_*|\cong M$.  
For example, the reals $\R$ with the unique fibrewise positive and non-trivial cone bundle admits a triangulation $|\Xi_*|\cong\R$ into an infinite $1$-dimensional simplicial set, the directed graph $\Xi_*=\cdots\ra\bullet\ra\bullet\ra\cdots$.

\begin{thm}
  \label{thm:triangulable}
	The following are equivalent for a closed conal manifold $M$.
	\begin{enumerate}
		\item\label{item:triangulability} $M$ admits a triangulation
		\item\label{item:cone.bundle.nice} The fibers of $T^+M$ are generating and free.
		\item\label{item:manifold.compact} For each connected component $M_x$ of $M$, there exists a subgroup $G_x\leqslant\Sigma_n\rtimes\Z^n$ with $|\sline_*^n/G_x|\cong M_x$.
  \end{enumerate}
\end{thm}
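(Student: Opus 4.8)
The plan is to prove the cyclic chain of implications $\ref{item:triangulability}\Rightarrow\ref{item:cone.bundle.nice}\Rightarrow\ref{item:manifold.compact}\Rightarrow\ref{item:triangulability}$. The last link is formal: stream realization preserves all small colimits, in particular coproducts and quotients by group actions, so $S_*=\bigsqcup_x\sline_*^n/G_x$ is a simplicial set with $|S_*|=\bigsqcup_x|\sline_*^n/G_x|\cong\bigsqcup_xM_x=M$, which is a triangulation. The weight therefore falls on the first two implications: the first is an explicit local calculation inside a single realized simplex, and the second — building a triangulation from a fibrewise condition on the cone bundle and then recognizing its universal cover — is where the real work lies.

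For $\ref{item:triangulability}\Rightarrow\ref{item:cone.bundle.nice}$, fix a triangulation $|S_*|\cong M$. Because $M$ is a closed manifold, $S_*$ is forced to be a combinatorial $n$-manifold: pure of dimension $n$, nonbranching, with the link of every $j$-simplex a combinatorial $(n-j-1)$-sphere. (Closedness is essential here; with boundary, defective links could survive.) A point of $M$ lies in the relative interior of a unique nondegenerate simplex $\sigma$, and its fiber in $T^+M$ is the amalgam, over the $n$-simplices $\tau\supseteq\sigma$, of the stream tangent cones of $|\tau|\cong|\Delta[n]|$ at that point. An explicit computation in the barycentric coordinates of $\Delta[n]$, with vertices $v_0<\cdots<v_n$, shows the stream tangent cone at an interior point to be the simplicial cone spanned by the edge directions $v_0\to v_1,\dots,v_{n-1}\to v_n$, so that this edge frame carries it isomorphically onto the standard orthant $\R^n_{\ge 0}$; the cones at lower strata of $|\Delta[n]|$ are faces of this one. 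The sphere-link condition then says precisely that the incident cones amalgamate back to a full-dimensional simplicial cone, so every fiber of $T^+M$ is isomorphic to $\R^n_{\ge 0}$, that is, generating and free.

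For $\ref{item:cone.bundle.nice}\Rightarrow\ref{item:manifold.compact}$ — the crux — the plan is to produce a triangulation whose universal cover is recognizably $\sline_*^n$. Fibrewise generativity and freeness make each fiber of $T^+M$ a full-dimensional simplicial cone with $n$ extreme rays, and fibrewise freeness moreover lets one choose, along these rays, $n$ generating vector fields that pairwise commute; their flows then sweep out an $n$-dimensional cell structure on $M$. One checks this cell structure is a cubulation — generativity guaranteeing that the cells are $n$-dimensional and exhaust $M$ — and passes to its adapted simplicial subdivision $S_*$, so that $|S_*|\cong M$ (and $\ref{item:triangulability}$ holds along the way). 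Since $M$ is closed, $S_*$ is a finite combinatorial $n$-manifold that is locally isomorphic to $\sline_*^n$; by homogeneity of $\sline_*^n$ and a unique-continuation argument in the simply connected cover, the universal cover of $S_*$ is identified with $\sline_*^n$. Hence each component $M_x$ equals $|\sline_*^n/G_x|$, where $G_x\leqslant\Aut(\sline_*^n)=\Sigma_n\rtimes\Z^n$ is the deck group, acting freely and properly discontinuously.

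I expect the main obstacle to be that last construction: turning the fibrewise cone hypothesis into an honest cubulation. The pointwise data delivers $n$ line fields, but one must select the generating vector fields so that they commute, integrate them to cells that genuinely close up rather than merely immerse, and verify that these cells assemble into a globally coherent simplicial set — this is where the global topology of the closed manifold $M$, and the precise content of fibrewise freeness and generativity, are actually used, in contrast to the essentially formal colimit argument and the single local tangent-cone computation that dispatch the other implications. (The same mechanism explains the dimensional restriction for spacetimes recorded in the abstract: a salient round light cone in dimension $\ge 3$ is not a simplicial cone, so fibrewise freeness fails and no such cubulation exists.)
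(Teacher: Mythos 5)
Your cyclic structure matches the paper's, and your $(\ref{item:manifold.compact})\Rightarrow(\ref{item:triangulability})$ step is exactly the formal colimit argument used there. But the implication you yourself identify as the crux, $(\ref{item:cone.bundle.nice})\Rightarrow(\ref{item:manifold.compact})$, is left as a plan rather than a proof, and the plan has a real obstruction. You propose to ``choose, along these rays, $n$ generating vector fields that pairwise commute'' and integrate them to a cubulation; but the extreme rays of the fibers of $T^+M$ only give an \emph{unordered} $n$-tuple of line fields, and the monodromy of a loop in $M$ may permute them --- this actually happens for the time-oriented Klein bottle, where the group $G$ contains elements with nontrivial $\Sigma_2$-component --- so the $n$ global vector fields need not exist at all, let alone commute; and even locally, commutativity is an integrability condition you have not verified. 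The paper circumvents all of this with Lemma \ref{lem:rigidity}: endow $M$ with any compatible Riemannian metric; by Lemma \ref{lem:conal.embeddings} every linear conal isometry of a fiber is a coordinate permutation, so the holonomy lands in the finite group $\Sigma_n$ and $M$ is flat; the exponential map then exhibits the universal cover as $\R^n$ with deck group $G\leqslant\Z^n\rtimes\Sigma_n$, and the triangulation is obtained by descending the $(\Z^n\rtimes\Sigma_n)$-equivariant isomorphism $|\sline_*^n|\cong\R^n$ (built from Lemma \ref{lem:1-triangulations} and \cite[Theorem 5.23]{krishnan2015cubical}) to the quotient. Some such global rigidity input is needed; the fibrewise data alone does not integrate.

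Your $(\ref{item:triangulability})\Rightarrow(\ref{item:cone.bundle.nice})$ argument also contains an unjustified step: the assertion that the sphere-link condition forces the amalgam of the incident tangent cones at a lower-dimensional stratum to be a full-dimensional \emph{simplicial} cone. Nothing about links being spheres prevents that amalgam from having more than $n$ extreme rays a priori; what actually rules this out is that $T^+M$ is by hypothesis a locally trivial cone subbundle of $TM$, so all fibers are isomorphic. That observation makes the whole link analysis unnecessary: the paper simply notes that the interior of a single top-dimensional simplex embeds as an open substream on which the cone is the standard orthant $\R^n_{\geqslant 0}$, and local triviality propagates freeness and generativity to every fiber. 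You should either invoke local triviality at the outset (and then one point suffices) or supply a genuine argument for the amalgamation claim.
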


Triangulability in this sense and closedness together thus constitute a rigid condition on conal manifold.  
Spacetimes and other phase spaces of interest need not be triangulable but admit arbitrary approximations by finer and finer meshes of triangulable locally preordered subspaces.  
The conal manifolds identified by the theorem can be regarded as toy models for understanding how to encode conal geometry into simplicial sets in this more generalized sense. 

\begin{figure}
	\includegraphics[width=4in,height=1.5in]{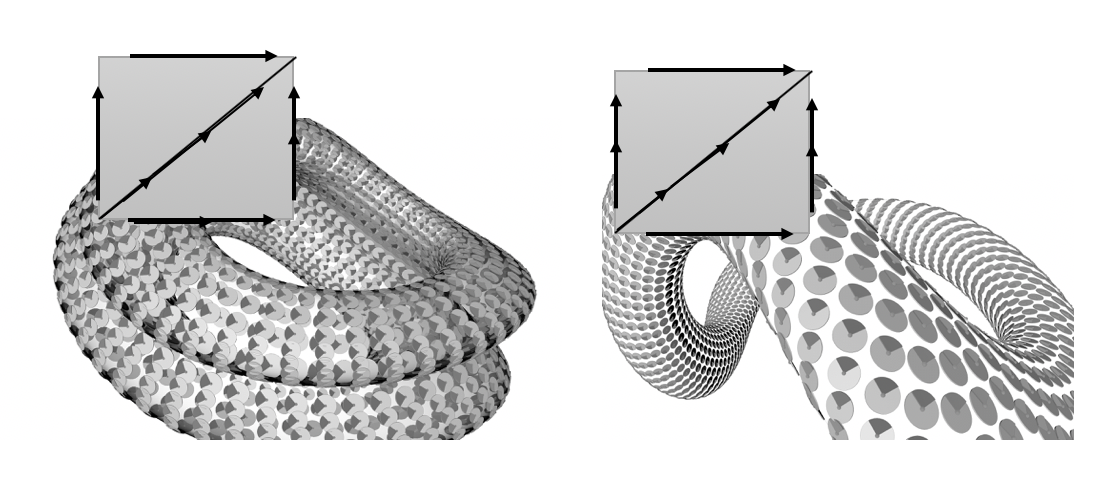}
  \caption{
    {\bf Conal manifolds}
    \textit{Conal manifolds}, smooth manifolds whose tangent spaces are all equipped with convex cones, naturally encode state spaces of processes under some causal constraints.
    The time-oriented Klein bottle (left) and time-oriented torus (right) depicted above are examples of conal manifolds that admit triangulations [Theorem \ref{thm:triangulable}], which encode casual and topological structure combinatorially.
    }
  \label{fig:compact.timelike.surfaces}
\end{figure}

\addtocontents{toc}{\protect\setcounter{tocdepth}{2}}

\section{Directed spaces}\label{sec:ditopology}
The causal structure on spacetime can be directly described without reference to the given Lorentzian metric.
A single \textit{causal preorder} on the underlying set of a past-distinguishing and future-distinguishing spacetime, or more generally a \textit{cone bundle} of causal tangent vectors on the underlying smooth manifold of a general spacetime, determines the entire causal conformal structure \cite[Theorem 1]{malament1977class}.
A \textit{circulation} generalizes the data of a cone bundle beyond the smooth setting.  
This section shows how to regard both \textit{conal manifolds}, smooth manifolds equipped with cone bundles, and topological simplices equipped with continuous lattice operations, as examples of \textit{streams}, spaces equipped with circulations.

\subsection{Preordered spaces}
A \textit{preorder} on a set $X$ is a transitive and reflexive relation on $X$.  
A \textit{preordered space} is a space $X$ equipped with a preorder, denoted by $\leqslant_X$, on its underlying set.
A \textit{monotone map} is a continuous function $f:X\ra Y$ between preordered spaces $X$ and $Y$ such that $\phi(x)\leqslant_Yf(y)$ whenever $x\leqslant_Xy$.  
The reader is referred elsewhere \cite{gierz2003continuous,nachbin1976topology} for a primer on order and topology.
Regard the topological $n$-simplex
$$\nabla[n]=\{{\mathbf t}\in\R^{n}\;|\;1\geqslant t_1\geqslant t_2\geqslant\cdots\geqslant t_n\geqslant 0\}\subset\R^n$$
as a preordered space with $x\leqslant_{\nabla[n]}y$ if $x_i\leqslant y_i$ for all $i=1,\ldots,n$. 

\begin{eg}
  The preordered space $\nabla[1]$ is the unit interval with the usual ordering.
\end{eg}

Examples of monotone maps are the functions
$$\delta_{i}:\nabla[n-1]\ra\nabla[n]\quad\sigma_i:\nabla[n+1]\ra\nabla[n]$$
defined by the following rules:
\begin{align*}
	\delta_0(x_1,x_2,\ldots,x_n)&=(0,x_1,x_2,\ldots,x_n)\\
  \delta_i(x_1,x_2,\ldots,x_n)&=(x_1,x_2,\ldots,x_i,x_{i},x_{i+1},\ldots,x_n) & 0<i\leqslant n\\
	\sigma_0(x_1,x_2,\ldots,x_{n+1})&=(x_2,\ldots,x_n)\\
	\sigma_n(x_1,x_2,\ldots,x_{n+1})&=(x_1,x_2,\ldots,x_n)\\
	\sigma_i(x_1,x_2,\ldots,x_{n+1})&=(x_1,\ldots,x_{i},x_{i+2},\ldots,x_n) & 0<i<n
\end{align*}

Consider a real topological vector space $V$.
A \textit{convex pointed cone} in $V$ is a subset of $V$ closed under vector addition and scalar multiplication by the set $\R_{\geqslant 0}$ of non-negative reals.
A convex pointed cone $C$ in a vector space $V$ is \ldots
\begin{enumerate}
	\item \textit{generating} if $C$ spans $V$ as a vector space
  \item \textit{free} if it is generated by a linearly independent set of vectors by vector addition and scalar multiplication by non-negative reals
\end{enumerate}

\begin{eg}
	The set $\R_{\geqslant 0}$ is a free and generating convex cone in $\R$.  
\end{eg}

An \textit{ordered topological vector space} is a real topological vector space $V$ equipped with a convex pointed cone $V^+$.
Regard $\R^n$ as an ordered topological vector space with 
$$(\R^n)^+=\R_{\geqslant 0}^n.$$ 
An ordered topological vector space will henceforth be regarded as a preordered space with $v_1\leqslant_Vv_2$ if $v_2-v_1\in V^+$.  

\subsection{Conal manifolds}\label{subsec:conal.manifolds}
A \textit{cone bundle} on a smooth manifold $\manifold$ is a subbundle
$$T^+\manifold\subset T\manifold$$
of the tangent bundle $TM$ that is fibrewise a pointed convex cone in $TM$.
A \textit{conal manifold} is a smooth manifold equipped with a cone bundle.
The reader is referred elsewhere for a primer on conal geometry \cite{khavkine2012characteristics}.
Each finite-dimensional ordered topological vector space $V$ will be regarded as a conal manifold whose cone bundles have fibers isomorphic to $V^+$.
In this manner, $\R^n$ and tangent spaces of conal manifolds will henceforth be regarded as conal manifolds.
More generally, a time-oriented Lorentzian manifold can be regarded as a conal manifold with cone bundle consisting of all non-spacelike tangent vectors.

\begin{eg}
  \label{eg:spacetimes}
  The cone bundle of a time-oriented Lorentzian manifold is \ldots
  \begin{enumerate}
    \item \ldots fibrewise generating
    \item \ldots fibrewise free if and only if the dimension of the manifold is less than $3$.
  \end{enumerate}
\end{eg}

Each conal manifold $M$ is equipped with a \textit{conal preorder} $\leqslant_M$ on its underlying set of points such that $x\leqslant_My$ if and only if there is a smooth path from $x$ to $y$ whose derivatives all lie in the cone bundle.
A \textit{conal map} is a smooth map between conal manifolds whose differential maps cone bundle into cone bundle.
A \textit{conal isomorphism} is a conal map admitting an inverse conal map.
A \textit{conal isometry} between conal manifolds equipped with Riemannian metrics is a conal map that is also an isometry.
Implicitly regard $\R^n$ as equipped with the standard Euclidean metric.  

\begin{lem}
  \label{lem:conal.embeddings}
	Every conal isometry $e:\R^n\cong\R^n$ factors as a composite
	$$e=\tau\sigma:\R^n\cong\R^n$$
	of a coordinate permutation $\sigma:\R^n\cong\R^n$ followed by an additive translation $\tau$.
\end{lem}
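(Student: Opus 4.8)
The plan is to analyze what a conal isometry $e : \R^n \cong \R^n$ must do, using the interplay between the linear structure (isometry) and the order structure (conal, i.e. monotone with monotone inverse for the preorder induced by the cone $(\R^n)^+ = \R_{\geqslant 0}^n$). First I would reduce to the linear case: since $e$ is an isometry of Euclidean space, a classical fact (Mazur--Ulam, or the elementary argument for $\R^n$) tells us $e = \tau \circ L$ where $\tau$ is translation by $e(0)$ and $L \in O(n)$ is linear. Translations are automatically conal isomorphisms (the differential is the identity, which preserves the cone bundle), so $e$ is a conal isometry if and only if $L$ is. Thus it suffices to show every linear orthogonal conal automorphism $L$ of $\R^n$ is a coordinate permutation $\sigma$.

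Next I would pin down what "conal" means for a linear map $L \in O(n)$: its differential at every point is $L$ itself, so the cone-bundle condition says $L((\R^n)^+) \subseteq (\R^n)^+$, and applying the same to $L^{-1} = L^{\mathrm{T}}$ gives $L((\R^n)^+) = (\R^n)^+$. So $L$ is an orthogonal linear automorphism of the orthant $\R_{\geqslant 0}^n$. The key step is then: the only orthogonal linear automorphisms of the standard orthant are the coordinate permutations. I would argue this via the extreme rays. The orthant $\R_{\geqslant 0}^n$ is a free generating cone whose extreme rays are exactly the $n$ rays $\R_{\geqslant 0} e_i$ spanned by the standard basis vectors; $L$ permutes the extreme rays, so there is a permutation $\pi \in \Sigma_n$ and positive scalars $\lambda_i > 0$ with $L e_i = \lambda_i e_{\pi(i)}$. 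Orthogonality forces $\|L e_i\| = \|e_i\| = 1$, hence $\lambda_i = 1$ for all $i$, so $L = \sigma$ is the permutation matrix of $\pi$, which is exactly a coordinate permutation. Composing, $e = \tau \sigma$ as claimed.

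The main obstacle, and the only point needing genuine care, is the identification of the extreme rays of the orthant and the claim that a linear automorphism must carry extreme rays to extreme rays — this is where "free and generating" does the work, guaranteeing the orthant is a simplicial cone with precisely $n$ extreme rays that form a basis, so that the induced permutation is well-defined and lands in $\Sigma_n$ rather than some larger symmetry. Everything else (the isometry-implies-affine step, translations being conal, $L^{-1} = L^{\mathrm{T}}$) is standard. I should also remark that the factorization is essentially unique: $\sigma$ is determined as the linear part of $e$ and $\tau$ as translation by $e(0)$, though the statement as given only asserts existence.
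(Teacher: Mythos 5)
Your proposal is correct and follows essentially the same route as the paper's proof: reduce to the linear case via isometry-implies-affine, note that the linear part is an automorphism of the cone $(\R^n)^+$, and conclude via extremal rays and unit length that it permutes the standard basis vectors. The only difference is that you spell out the extreme-ray argument in more detail than the paper does.
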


The following proof uses the fact that every isometry between Euclidean spaces is an affine transformation.  
An alternative proof uses the rigidity of embeddings of the topological lattice $\I^n$ into $\R^n$ \cite[Theorem 2.5]{fernandes2007classification}.  

\begin{proof}
	Every isometry is an affine transformation and hence the composite of a linear transformation followed by an additive translation, a conal isomorphism.
	It therefore suffices to consider the case $e$ a linear isomorphism.
	In that case, $e$ restricts and corestricts to an automorphism on $(\R^n)^+$ of convex cones by $e$ an isomorphism of ordered topological vector spaces.
	Thus $e$ preserves unit-length extremal vectors in $(\R^n)^+$.
	Thus $e$ permutes standard basis vectors.
\end{proof}

Smooth compact conal manifolds with fibrewise free and generating cone bundle are especially rigid.
Regard $\R^n$ as admitting an action of $\Z^n$ by additive translations, $\Sigma_n$ by coordinate permutations, and hence a semidirect product $\Z^n\semidirectproduct\Sigma_n$ by composites of such conal isomorphisms.

\begin{lem}
  \label{lem:rigidity}
  The following are equivalent for a connected compact conal $n$-manifold $M$.
  \begin{enumerate}
    \item $T^+M$ is fibrewise free and fibrewise generating
    \item There exist subgroup $G$ of $\Z^n\rtimes\Sigma_n$ and conal isomorphism $M\cong\R^n/G$.
  \end{enumerate}
\end{lem}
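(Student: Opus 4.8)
The plan is to prove $(2)\Rightarrow(1)$ by transport of structure along the given conal isomorphism, and $(1)\Rightarrow(2)$ by extracting a flat conal atlas from the cone bundle and developing it. For $(2)\Rightarrow(1)$: every fiber of the cone bundle of $\R^n$ equals $\R_{\geqslant0}^n$, which is free and generating, and (as noted just before the statement) $\Z^n\rtimes\Sigma_n$ acts on $\R^n$ by conal isomorphisms, so the cone bundle descends to $\R^n/G$. Freeness and generativity of a convex cone in a vector space are preserved by linear isomorphisms, hence by the differentials of the conal charts $\R^n/G\to\R^n$ and of the isomorphism $M\cong\R^n/G$; thus $T^+M$ is fibrewise free and generating.

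For $(1)\Rightarrow(2)$: a free generating cone in an $n$-dimensional space is the image of $\R_{\geqslant0}^n$ under a linear isomorphism, and its set of extremal rays is the image of the $n$ coordinate half-axes. So the hypothesis supplies, smoothly over all of $M$, an unordered $n$-element family of ray subbundles $L_1,\ldots,L_n\subseteq TM$ with $L_1\oplus\cdots\oplus L_n=TM$; equivalently, $TM$ reduces its structure group to $\Sigma_n\ltimes(\R_{>0})^n$, the linear automorphism group of $\R_{\geqslant0}^n$. I would then upgrade this first-order datum to a conal atlas of charts $\phi_\alpha\colon U_\alpha\to\R^n$, each straightening the $L_i|_{U_\alpha}$ to (a permutation of) the coordinate half-axes, whose transition maps are conal isomorphisms of open subsets of $\R^n$, and I would simultaneously fix a Riemannian metric on $M$ adapted to this atlas for which every $\phi_\alpha$ is a conal isometry. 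Then Lemma \ref{lem:conal.embeddings} forces each transition map to be the restriction of an element of $\R^n\rtimes\Sigma_n$, so $M$ carries an $(\R^n\rtimes\Sigma_n,\R^n)$-structure.

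Now develop: let $D\colon\widetilde M\to\R^n$ be the developing map and $\Gamma\leqslant\R^n\rtimes\Sigma_n$ the holonomy image of $\pi_1M$. Since $M$ is closed, the induced flat metric is complete, so $D$ is a covering onto $\R^n$ and hence a conal isometry $\widetilde M\cong\R^n$; therefore $M\cong\R^n/\Gamma$ with $\Gamma$ acting freely, properly discontinuously, and cocompactly. Finally, $\Gamma$ is a Bieberbach-type group whose translation part $\Lambda$ is a rank-$n$ lattice normalized by a point group $P\leqslant\Sigma_n$; a $P$-equivariant positive anisotropic rescaling of $\R^n$, which is a conal automorphism, carries $\Lambda$ onto $\Z^n$ while keeping the point group inside $\Sigma_n$, so that $\Gamma$ becomes a subgroup $G\leqslant\Z^n\rtimes\Sigma_n$ with $M\cong\R^n/G$. (Alternatively one arranges $\Lambda=\Z^n$ from the start by calibrating the length scale of each ray field across a fundamental domain.)

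The main obstacle is the middle step of $(1)\Rightarrow(2)$: passing from the structure-group reduction to an honest flat conal atlas. Integrability of a $\Sigma_n\ltimes(\R_{>0})^n$-structure — that the ray fields can be straightened simultaneously to coordinate directions — is not formal, and is where the fact that $T^+M$ is a subbundle of the \emph{tangent} bundle, together with closedness of $M$, must be used; moreover, even once local coordinates exist, a general conal automorphism of $\R^n$ is merely a coordinatewise-monotone diffeomorphism, so producing the complete flat metric that compresses the transition pseudogroup into $\R^n\rtimes\Sigma_n$ (thereby making Lemma \ref{lem:conal.embeddings} applicable) is the genuinely delicate point.
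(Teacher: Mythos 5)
Your $(2)\Rightarrow(1)$ direction is fine (the paper does not even record an argument for that implication), but your $(1)\Rightarrow(2)$ has a genuine gap, and it is the one you flag yourself: the passage from the fibrewise data to a flat conal atlas. Fibrewise freeness and generativity give you an unordered splitting of $TM$ into ray subbundles $L_1,\ldots,L_n$, i.e.\ a reduction of the structure group to $\Sigma_n\ltimes(\R_{>0})^n$, but such a first-order reduction is not automatically integrable, and nothing in your write-up produces the charts $\phi_\alpha$ straightening the rays, let alone the adapted metric that compresses the transition pseudogroup into $\R^n\rtimes\Sigma_n$ so that Lemma \ref{lem:conal.embeddings} becomes applicable. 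The step cannot be formal: a parallelizable manifold such as $S^3$ carries a fibrewise free and generating cone bundle spanned by a global frame, yet admits no identification with any $\R^3/G$, so whatever closes this gap must use strictly more than the pointwise structure of the cones. As written, your argument is a correct skeleton whose load-bearing wall is explicitly left out, and everything after it (developing map, completeness, Bieberbach normalization of the lattice to $\Z^n$) is conditional on that wall.

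For comparison, the paper's proof avoids local charts entirely. It fixes a compatible Riemannian metric, notes that each fiber of $TM$ is conally isomorphic to $\R^n$ so that every linear conal isometry of a fiber is a coordinate permutation by Lemma \ref{lem:conal.embeddings}, concludes that the Riemannian holonomy is a subgroup of $\Sigma_n$ and hence discrete, so the metric is flat; the exponential map at a point of the universal cover is then a conal isometry from $\R^n$, the deck transformations are isometric conal automorphisms of $\R^n$ and hence lie in $\R^n\rtimes\Sigma_n$ by Lemma \ref{lem:conal.embeddings} again, and compactness together with finite generation of the deck group normalizes the translation parts into $\Z^n$. This trades your integrability problem for the single assertion that parallel transport preserves the cone bundle (so that holonomy consists of conal maps) --- which is exactly the point at which the $S^3$ example above also presses on the paper's argument. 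If you want to complete your proof, that is the statement to aim at, rather than straightening the ray fields chart by chart.
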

\begin{proof}
  Endow $M$ with a Riemannian metric compatible with its smooth structure.
  Each fiber of $TM$ is isomorphic as a conal manifold to $\R^n$.
  Hence each linear conal isometry $T_mM\cong T_mM$ is a coordinate permutation [Lemma \ref{lem:conal.embeddings}].
  Thus $M$ is flat because its Riemannian holonomy, a subgroup of the discrete group $\Sigma_n$, is discrete.

  Let $u:U\ra M$ be the universal covering of $M$.
  Endow $U$ with Riemannian metric and cone bundle pulled back from $M$ along $u$.
  Fix $x\in U$.
  Let $\exp_x$ denote the exponential map $T_xU\ra U$.
  The underlying Riemannian manifold of $U$ is $\R^n$ by $M$ connected, compact, and flat; thus $\exp_x$ is a diffeomorphism.
	Then $\exp_x$ embeds an open neighborhood of each point, regarded as a submanifold of $T_xU$ whose cone bundle is a restriction of the cone bundle on $T_xU$, into $U$.
  Thus $\exp_x$ is a conal isomorphism.
  Thus $u$ can taken to be of the form
  \begin{equation}
    \label{eqn:conal.cover}
    u:\R^n\ra M.
  \end{equation}

  Let $G$ be the group of deck transformations of (\ref{eqn:conal.cover}).
	Each $g\in G$, an isometric conal isomorphism $\R^n\cong\R^n$, factors into a coordinate permutation followed by an additive translation [Lemma \ref{lem:conal.embeddings}].  
  All of these translation components of elements in $G$ can be taken to be translations by integer vectors because $G$ is finitely generated by $M$ compact.  
	Thus $G$ can be taken to be a subgroup of $\Z^n\rtimes\Sigma_n$.
\end{proof}

In fact, every quotient of $\R^n$ by a cocompact and fixed-point free subgroup $G$ of $\Z^n\rtimes\Sigma_n$ gives rise to just such a compact conal manifold.
The only possibilities when $n=2$ are the time-oriented torus and time-oriented Klein bottle [Figure \ref{fig:compact.timelike.surfaces}].
For the torus, $G=\Z^2$.
For the Klein bottle, $G\cong\Z*_{2\Z}\Z$ is presented by two conal automorphisms $\alpha:(x,y)\mapsto(y+1,x)$ and $\beta:(x,y)\mapsto(y,x+1)$ on $\R^2$ subject to a single relation $\alpha^2\beta^{-2}$.

\subsection{Streams}\label{subsec:streams}
A \textit{circulation} on a topological space $X$ is a function
$$\leqslant:U\mapsto\;\leqslant_U$$
assigning to each open subset $U\subset X$ a preorder $\leqslant_U$ on $U$ such that $\leqslant$ sends the union of a collection $\mathcal{O}$ of open subsets of $X$ to the preorder with smallest graph containing the graph of $\leqslant_U$ for each $U\in\mathcal{O}$ \cite{krishnan2009convenient}.
A \textit{stream} is a space equipped with a circulation on it.
A continuous function $f:X\ra Y$ of streams is a \textit{stream map} if $f(x)\leqslant_Uf(y)$ whenever $x\leqslant_{f^{-1}U}y$ \cite{krishnan2009convenient}.
A \textit{stream isomorphism} is a stream map $f:X\ra Y$ for which there exists a stream map $g:Y\ra X$ with $gf=\id_X$ and $fg=\id_Y$.
The reader is referred elsewhere \cite{krishnan2009convenient} for the point-set theory of streams.  

\begin{eg}
  \label{eg:initial.circulations}
  Every topological space admits an \textit{initial circulation} $\leqslant$ defined by
  \begin{equation*}
    x\leqslant_Uy\iff x=y\in U
  \end{equation*}
\end{eg}

\begin{eg}
  \label{eg:conal.streams}
  Each conal manifold $\manifold$ admits a circulation assigning each open submanifold $U\subset M$, regarded as a conal manifold with the restricted cone bundle, the conal preorder $\leqslant_U$ on $U$.
  Thus all conal manifolds will henceforth be regarded as streams.
  On certain conal manifolds like $\R^n$, there exists a unique circulation sending the entire conal manifold to its conal preorder \cite[Lemma 4.2]{krishnan2009convenient}.
	Uniqueness implies that a continuous functions$f:X\ra\R^n$ from a stream $X$ is a stream map if $f(x)\leqslant_{\R^n}f(y)$ whenever $x\leqslant_Xy$.
\end{eg}

Explicit constructions of circulations are often cumbersome.
Henceford regard $\R^n$ and $\nabla[n]$ as streams as described in the following examples.

\begin{eg}
  \label{eg:conal.streams}
	There exists a unique circulation turning the underlying space of $\nabla[n]$ into a stream whose circulation sends the entire space to the given preorder $\leqslant_{\nabla[n]}$ on $\nabla[n]$ \cite[Lemmas 4.2, 4.4 and Example 4.5]{krishnan2009convenient}.
	Uniqueness implies that all monotone maps of the form
	$$\delta_i:\nabla[n]\ra\nabla[n-1]\quad\sigma_i:\nabla[n]\ra\nabla[n+1]$$
	are stream maps.
\end{eg}

Streams, like topological spaces, admit universal constructions such as products, disjoint unions, and quotients.
A disjoint union $\amalg_iX_i$ of a family of streams $X_i$ is an ordinary disjoint union of underlying spaces equipped with the unique circulation whose restriction to the topology of $X_j$ coincides with the circulation on $X_j$.
A quotient of a stream $X$ by an equivalence relation $\equiv$ on the underlying set of $X$ is the quotient space $X/\!\!\equiv$ with circulation assigning to each open subset $U/\!\!\equiv$ the transitive closure of the relation induced by the preorder $\leqslant_U$ on $U\subset X$.
A finite product $X\times Y$ of streams $X$ and $Y$ is the ordinary product of underlying spaces equipped with the unique circulation sending an open subset of the form $U\times V$ to the product of the preorders $\leqslant_U$ and $\leqslant_V$.  

\section{Simplicial sets}\label{sec:simplicial}
A \textit{simplicial set} is an abstract generalization of a simplicial complex with the extra structure of consistent choices of orientations on the simplices.  
Formally, a \textit{simplicial set} $S_*$ is a graded sequence $S_0,S_1,\ldots$ of sets equipped with functions
\begin{equation}
	\label{eqn:structure.maps}
	d_0,d_1,\ldots,d_n,d_{n+1}:S_{n+1}\ra S_{n}\quad s_0,s_1,\ldots,s_n:S_{n}\ra S_{n+1}
\end{equation}
for each $n=0,1,\ldots$, all satisfying all possible identities of the following form:
\begin{align*}
	d_id_j&=d_{j-1}d_i & i<j\\
	s_is_{j-1}&=s_js_{i} & i<j\\
	d_is_j&=s_{j-1}d_i & i<j\\
	d_is_j&=s_jd_{i-1} & i>j+1\\
	d_is_i&=d_{i+1}s_i=\id_{S_n}
\end{align*}

The reader is referred elsewhere \cite{friedman2012survey} for the general theory of simplicial sets, including their relationship with simplicial complexes.  
An $n$-simplex in a simplicial set $S_*$ is an element in $S_n$.  
An $n$-simplex $\theta$ is \textit{non-degenerate} if $n=0$ or $\theta$ does not lie in the image of a function of the form $s_i:S_{n-1}\ra S_n$.  
Intuitively for an $n$-simplex $\theta$, $d_i\theta$ is its $i$th face and $s_i\theta$ is an interpretation of $\theta$ as a degenerate $(n+1)$-simplex.
The \textit{dimension} of a simplicial set $S_*$ is the supremum over all $n=0,1,2,\ldots$ for which $S_*$ has a non-degenerate $n$-simplex.

\begin{eg}
	A $1$-dimensional simplicial set $S_*$ is determined by restricted functions
	$$d_1,d_0:S_1-s_0(S_0)\ra S_0,$$
	which can be respectively regarded as the source and target of a directed graph with vertex set $S_0$ and edge set $S_1-s_0(S_0)$ consisting of all non-degenerate $1$-simplices in $S_*$.  
\end{eg}

Let $\sline_*$ be the $1$-dimensional simplicial set with $\sline_0$ the set of integers, $\sline_1$ the set of all pairs of integers of the form $(i,i)$ and $(i,i+1)$, $d_1,d_0$ respectively defined by projection onto first and second coordinates, and $s_0$ defined by $s_0(i)=(i,i)$.  
As a directed graph,
$$\sline_*=\left(\cdots\ra\bullet\ra\bullet\ra\cdots\right)$$

An $n$-simplex $\theta$ in $S_*$ \textit{has vertex $v$} if $v=d_{i_1}d_{i_2}\cdots d_{i_n}\theta$ for some choie of natural numbers $i_1,i_2,\ldots,i_n$.
A simplicial set $S_*$ is \textit{locally finite} if, for each vertex $v$, there exists only finitetly many non-degenerate simplices having vertex $v$.
\textit{Products} and \textit{disjoint unions} of simplicial sets are straightforwardly defined degreewise.  
An \textit{action} of a group $G$ on a simplicial set $S_*$ is an action of $G$ on each of the sets $S_0,S_1,\ldots$ commuting with the structure maps (\ref{eqn:structure.maps}).  
For each simplicial set $S_*$ equipped with the action of a group $G$, the structure maps (\ref{eqn:structure.maps}) induce structure maps on the degreewise orbits $S_*/G$ turning $S_*/G$ into a simplicial set.  

\section{Triangulations}\label{sec:triangulations}
The \textit{stream realization} of a simplicial set $S_*$ is the stream
$$|S_*|=\quotient{\amalg_{n=0}^\infty\;S_n\times\nabla[n]}{\equiv},$$
where $S_n$ is regarded as a discrete topological space equipped with the initial circulation and $\equiv$ is the smallest equivalence generated by all relations of the form $(d_i\theta,x)\equiv(\theta,\delta_i(x))$ and $(s_i\theta,x)\equiv(\theta,\sigma_i(x))$.  
Intuitively, $|S_*|$ is a stream obtained by gluing together different copies of $\nabla[n]$, one for each $n$-simplex, according to the structure maps of $S_*$.  
Formally, stream realization preserves disjoint unions and quotients.  
Less formally, stream realization preserves certain products.

\begin{thm:monoidal.direalization}
	For all locally finite simplicial sets $A_*$ and $B_*$,
	$$|A_*\times B_*|\cong|A_*|\times|B_*|.$$
\end{thm:monoidal.direalization}

\begin{defn}
	\label{defn:triangulable}
  A \textit{triangulation} of a stream $X$ is a stream isomorphism of the form
  $$|S_*|\cong X$$
\end{defn}

An example is the following triangulation of $\R$.  

\begin{lem}
	\label{lem:1-triangulations}
  There exists a triangulation $|\Xi_*|\cong\R$ bijectively sending vertices to integers.  
\end{lem}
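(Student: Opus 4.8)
The plan is to describe the underlying space of $|\Xi_*|$ by hand, exhibit the evident identification with $\R$, and then check that it carries one circulation to the other.

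First I would unwind the realization. The non-degenerate simplices of $\Xi_*$ are the vertices $i\in\Xi_0=\Z$ and the edges $(i,i+1)\in\Xi_1$; the degenerate edges $s_0(i)=(i,i)$ contribute nothing, since the relation $(s_0(i),x)\equiv(i,\sigma_0(x))$ collapses their copy of $\nabla[1]$ to the vertex $i$. Hence $|\Xi_*|$ is the quotient of $\amalg_{i\in\Z}\nabla[1]$, one copy $I_i\cong[0,1]$ for each edge $(i,i+1)$, obtained --- as forced by the relations $(d_j\theta,x)\equiv(\theta,\delta_j x)$ --- by gluing one endpoint of $I_i$ to one endpoint of $I_{i+1}$, the identified points being the vertices of $|\Xi_*|$, so that the directed edge $(i,i+1)$ runs from the vertex $i$ to the vertex $i+1$. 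I would then define $h\colon|\Xi_*|\to\R$ through the universal property of the quotient: on the copy $\{(i,i+1)\}\times\nabla[1]$ let $h$ be the monotone affine reparametrization of $[0,1]$ onto $[i,i+1]\subset\R$ taking the vertex $i$ to $i$ and the vertex $i+1$ to $i+1$. These local formulas agree on the glued endpoints, so they descend to a continuous map $h$, which is by construction a bijection restricting to $i\mapsto i$ on vertices; and since $\Xi_*$ is locally finite, $|\Xi_*|$ is a locally compact CW complex on which $h$ restricts to a homeomorphism on each closed cell, the images $[i,i+1]$ forming a locally finite closed cover of $\R$, so $h$ is a homeomorphism of underlying spaces.

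Second --- the substance --- I would promote $h$ to a stream isomorphism. For $h$ itself I would use that $\R$ carries the unique circulation sending $\R$ to $\leqslant_{\R}$ [Example~\ref{eg:conal.streams}], so that a continuous map into $\R$ from a stream is a stream map once it is monotone for the global preorders. The global preorder on $|\Xi_*|$ is the transitive closure of the preorders on the realized copies of $\nabla[1]$; on $I_i$ this is the usual order of $[0,1]$, oriented so that the vertex $i$ is least and the vertex $i+1$ greatest --- the orientation $h$ was built to respect --- so transitively $x\leqslant_{|\Xi_*|}y$ forces $h(x)\leqslant h(y)$, and $h$ is a stream map. For the inverse it suffices to test the stream-map condition on a basis of opens of $\R$, and I would take the open intervals $V$: the circulation on $\R$ assigns $V$ the restriction of $\leqslant_{\R}$, while $h^{-1}(V)$ is a connected open of $|\Xi_*|$ whose circulation is again the restriction of the directed-line order --- the quotient circulation, being a transitive closure of the orders induced from the $\nabla[1]$'s, only ever adds comparabilities --- and $h^{-1}|_V$ is a monotone bijection onto it; so $h^{-1}$ is a stream map and $h$ a stream isomorphism, visibly sending vertices bijectively to $\Z$.

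The one step I expect to require care is this last matching of circulations: confirming that gluing the intervals $I_i$ end to end yields, on each open set, exactly the comparabilities of $\leqslant_{\R}$ transported through $h$ --- the ``no fewer'' half coming from monotonicity of $h^{-1}$ and the ``no more'' half from monotonicity of $h$. Everything else is the routine bookkeeping of geometric realization for a locally finite $1$-dimensional simplicial set.
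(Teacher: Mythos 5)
Your proof is correct and follows essentially the same route as the paper's: the same edgewise affine maps $\nabla[1]\to[i,i+1]$ descending through the quotient, the same locally finite closed cover argument for the homeomorphism, and the same appeal to the uniqueness of the circulation on $\R$ sending all of $\R$ to the standard order. The only cosmetic difference is that the paper handles the inverse by observing that a homeomorphism which is a global order isomorphism onto $\R$ is automatically a stream isomorphism by that uniqueness, whereas you verify the condition on a basis of open intervals; both are sound.
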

\begin{proof}
	For each $\theta\in\Xi_1$, define a monotone map and hence stream map
	$$\varphi_\theta:\nabla[1]\ra\R$$
	by the rule $\varphi_\theta(t)=(1-t)d_1\theta+td_0\theta$.
	The induced stream map 
	$$\amalg_{\theta\in\Xi_1}\varphi_{\theta}:\amalg_{\theta\in\Xi_1}\nabla[n]\ra\R$$
	passes to a stream map $\varphi:|\Xi_*|\cong\R$, a homeomorphism because $\R$ is the union of its closed intervals with the union topology.  
	Consider $t_1\leqslant_{\R}t_2$.  
	In the case $t_1,t_2$ lie in a closed interval of the form $[i,i+1]$ for an integer $i$, then $\varphi^{-1}(t_1)\leqslant_{|\Xi_*|}\varphi^{-1}(t_2)$ because $\varphi^{-1}_{(i,i+1)}(t_1)\leqslant_{\nabla[1]}\varphi^{-1}_{(i,i+1)}(t_2)$.
	In the general case, $\varphi^{-1}(t_1)\leqslant_{|\Xi_*|}\varphi^{-1}(t_2)$ by $\leqslant_{|\Xi_*|}$ a preorder and $\leqslant_{\R}$ the transitive closure of the restrictions of the preorder on closed intervals of the form $[i,i+1]$.  
	Thus $\varphi^{-1}$ is a monotone map of preordered spaces.
	Thus $\varphi$ is an isomorphism of preordered spaces and hence a stream isomorphism by uniqueness of the circulation on $\R$ sending all of $\R$ to the standard order.  
\end{proof}

A triangulation $|S_*|\cong M$ of a conal manifold $M$ forces the simplicial set $S_*$ to essentially be determined by the much simpler data of a \textit{presimplicial set/semisimplicial set/$\Delta$-complex} \cite{friedman2012survey} by local triviality of the cone bundle.   
However, presimplicial sets are inconvenient as an intrinsic formalism for combinatorial phase spaces because the analogue of  \cite[Theorem 5.23]{krishnan2015cubical} fails to hold for them.
This latter result is convenient in the proof of the main result.

\begin{proof}[proof of Theorem \ref{thm:triangulable}]
	The simplicial set $\sline_*^{n}$ admits a $\Sigma_n$ action defined by permutation of factors, $\Z^n$ action defined by addition of integer vectors, and hence a $(\Z^n\rtimes\Sigma_n)$-action defined by composites of the previous sorts of actions.  

	Assume (\ref{item:triangulability}).  
  Consider a triangulation $|S_*|\cong M$.  
	The dimension $n$ of $S_*$ is the dimension of $M$ because the underlying space of $S_*$ is the ordinary geometric realization of $S_*$. 
	Thus there exists a non-degenerate $n$-simplex $\theta$ in $S_n$.
  Let $U=\nabla[n]-\cup_{i=0}^n\delta_i(\nabla[n-1])$.
	The quotient stream map $\amalg_nS_n\times\nabla[n]\ra|S_*|$ restricts and costricts to an isomorphism from $\{\theta\}\times U$ to an open substream of $|S_*|$.  
	Then there exists a point in $|S_*|$ and hence also a point in $M$ admitting an open neighborhood, regarded as an open substream, isomorphic to $\R^n$.  
	Thus $T^+M$ has fibers isomorphic to $\R^n$ and hence (\ref{item:cone.bundle.nice}) by local triviality of the cone bundle on $M$.

	Assume (\ref{item:cone.bundle.nice}).
	It suffices to take $M$ connected because $|-|$ preserves disjoint unions.  
  In that case, take $M=\R^n/G$ for some discrete subgroup $G$ of $\Z^n\rtimes\Sigma_n$ without loss of generality [Lemma \ref{lem:rigidity}].

  Define $e_n:|S_*^n|\cong\R^n$ as follows.
	In the case $n=1$, $e_1$ can be taken to be a $\Z$-equivariant stream isomorphism [Lemma \ref{lem:1-triangulations}].
	Let $e_n$ be the induced stream isomorphism between $n$-fold products, of the desired form because $|-|$ preserves finite products of locally finite simplicial sets \cite[Theorem 5.23]{krishnan2015cubical}.
  Then $e_n$, $\Z^n$-equivariant by $e_1$ $\Z$-equivariant and $\Sigma_n$-equivariant by construction, is $G$-equivariant.
	Thus $M=\R^n/G\cong|\sline^n_*|/G$, isomorphic to $|\sline_*^n/G|$ because $|-|$ preserves quotients.
	Thus (\ref{item:manifold.compact}).

	Assume (\ref{item:manifold.compact}).
	We can decompose $M=\amalg_iM_i$ with each $M_i$ connected.
	For each $i$, there exists subgroup $G_i$ of $\Sigma_n\rtimes\Z^n$ with $M_i=\R^n/G_i$ [Lemma \ref{lem:rigidity}]
	Then $|\amalg_{i}(\sline_*^n/G_i)|=\amalg_i|\sline_*^n/G_i|\cong\amalg_iM_i=M$.
	Thus (\ref{item:triangulability}).
\end{proof}

\section{Acknowledgements}
The author is grateful to Robert Ghrist for conceiving of and producing the visualizations of conal manifolds behind the triangulations in Figure \ref{fig:compact.timelike.surfaces} and Ulrich Gelrach for helpful conversations on GR.  
This work was supported by AFOSR grant FA9550-16-1-0212.

\appendix
\addcontentsline{toc}{section}{Appendix}
\addtocontents{toc}{\protect\setcounter{tocdepth}{0}}


\bibliography{gv}{}
\bibliographystyle{plain}
\end{document}